\definecolor{blackgreen}{RGB}{0,80,0}
\newcommand{\Real}{\mathbb R}
\newtheorem{thm}{Theorem}[section]
\newtheorem{lemma}[thm]{Lemma}
\newtheorem{rem}[thm]{Remark}
\numberwithin{equation}{section}
\title{Blow-up of non-radial solutions for the $L^2$ critical inhomogeneous NLS equation}
\author{Mykael Cardoso and Luiz Gustavo Farah} 
\date{} 
\begin{document}
\maketitle
	
\begin{abstract}\noindent
We consider the $L^2$ critical inhomogeneous nonlinear Schr\"odinger (INLS) equation in $\mathbb{R}^N$
\begin{align}\label{inls}
i \partial_t u +\Delta u +|x|^{-b} |u|^{\frac{4-2b}{N}}u = 0,
\end{align}
where $N\geq 1$ and $0<b<2$. We prove that if $u_0\in H^1(\mathbb{R}^N)$ satisfies $E[u_0]<0$, then the corresponding solution blows-up in finite time. This is in sharp contrast to the classical $L^2$ critical NLS equation where this type of result is only known in the radial case for $N\geq 2$. 
\end{abstract}

\section{Introduction}
In this work we consider the initial value problem (IVP) for the $L^2$ critical inhomogeneous nonlinear Schr\"odinger (INLS) equation
\begin{equation}
\begin{cases}
i \partial_t u + \Delta u + |x|^{-b} |u|^{\frac{4-2b}{N}}u = 0, \,\,\, x \in \mathbb{R}^N, \,t>0,\\
u(0) = u_0 \in H^1(\mathbb{R}^N),
\end{cases}
\label{PVI}
\end{equation}
where $N\geq 1$ and $0<b<2$. For $b=0$, \eqref{PVI} reduces to the IVP associated to the classical nonlinear Schr\"odinger (NLS) equation. This model is called $L^2$ critical since the scaling symmetry $u(x,t)\mapsto \lambda^{\frac{N}{2}}u(\lambda x, \lambda^2 t)$ leaves invariant the $L^2$ norm. The local well-posedness of the IVP \eqref{PVI} was obtained by \citet[Appendix K]{GENSTU} (see also \citet{CARLOS}). Moreover, \citet{G12}  also proved that this problem is globally well-posed below the ground state threshold.

The solutions of the IVP \eqref{PVI} satisfy mass and energy conservation laws given respectively by
\begin{equation}\label{Mass}
M[u(t)] =\int  |u(x,t)|^2\, dx =M[u_0]. 
\end{equation}
and
\begin{equation}\label{Energy}
E[u(t)] =\frac12 \int  |\nabla u(x,t)|^2\, dx - \frac{N}{4-2b+2N}\int |x|^{-b}|u(x,t)|^{\frac{4-2b}{N}+2}\,dx=E[u_0]. 
\end{equation}
Our main result is the following.
\begin{thm}\label{Blowup} Let $N\geq 1$ and $0<b<2$. If $u_0\in H^1(\mathbb{R}^N) $ and $E[u_0]< 0,$ then the corresponding solution $u(t)$ to \eqref{PVI} blows-up in finite.
\end{thm}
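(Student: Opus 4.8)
The plan is a localized virial (Glassey/Morawetz-type) argument run by contradiction: assume the maximal $H^1$ solution $u$ is global in time. Recall the exact virial identity for \eqref{PVI}: for data with $\int|x|^2|u_0|^2\,dx<\infty$ one has, writing $p-1:=\tfrac{4-2b}{N}$ (so $p+1=\tfrac{4-2b+2N}{N}$ and $N(p-1)+2b=4$),
\[
\frac{d^2}{dt^2}\int_{\mathbb R^N}|x|^2|u(t)|^2\,dx=8\|\nabla u(t)\|_{L^2}^2-\frac{4\bigl(N(p-1)+2b\bigr)}{p+1}\int|x|^{-b}|u(t)|^{p+1}\,dx=16\,E[u_0],
\]
the $L^2$-critical scaling being precisely what makes the right-hand side a multiple of the conserved energy. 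For finite-variance data this already finishes the proof, since a nonnegative function with constant negative second derivative cannot stay nonnegative. The whole issue is to dispense with the finite-variance assumption --- which for $b=0$ is only known in the radial case, $N\ge2$.

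For that I would introduce a cut-off weight $\varphi_R(x)=R^2\varphi(x/R)$ with $\varphi$ smooth, radial, equal to $|x|^2$ near the origin, constant near infinity, $0\le r^{-1}\partial_r\varphi\le2$ and $\partial_r^2\varphi\le2$ everywhere, and set $V_R(t)=\int\varphi_R|u(t)|^2\,dx$, so that $0\le V_R(t)\le CR^2M[u_0]$. A short truncation near $x=0$ justifies the virial computation for merely $H^1$ data with the singular weight $|x|^{-b}$, and one finds that $V_R''(t)$ equals the integrand of the exact identity above on $\{|x|\le R\}$, plus correction terms supported on $\{|x|\ge R\}$: a kinetic correction that is $\le0$ thanks to the Hessian bounds on $\varphi$, a fourth-order term of size $O(R^{-2}M[u_0])$, and a nonlinear correction bounded by $C\int_{|x|\ge R}|x|^{-b}|u(t)|^{p+1}\,dx$. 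Hence
\[
V_R''(t)\le 16\,E[u_0]+CR^{-2}M[u_0]+C\int_{|x|\ge R}|x|^{-b}|u(t)|^{p+1}\,dx .
\]

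The key step --- and the place where the inhomogeneity plays the role that radial symmetry plays for $b=0$ --- is the estimate of $\int_{|x|\ge R}|x|^{-b}|u|^{p+1}\,dx$. On $\{|x|\ge R\}$ one has $|x|^{-b}\le R^{-b}$, and the Gagliardo--Nirenberg inequality at the $L^2$-critical exponent $p+1=2+\tfrac{4-2b}{N}$ reads $\|v\|_{L^{p+1}}^{p+1}\le C\|v\|_{L^2}^{\sigma}\|\nabla v\|_{L^2}^{\,2-b}$ with $\sigma=\tfrac{4+b(N-2)}{N}>0$ and, decisively, with kinetic exponent $2-b<2$. Together with mass conservation this gives $\int_{|x|\ge R}|x|^{-b}|u(t)|^{p+1}\,dx\le CR^{-b}M[u_0]^{\sigma/2}\|\nabla u(t)\|_{L^2}^{\,2-b}$, and since $2-b<2$ Young's inequality yields, for every $\varepsilon>0$, $\int_{|x|\ge R}|x|^{-b}|u(t)|^{p+1}\,dx\le\varepsilon\|\nabla u(t)\|_{L^2}^2+C(\varepsilon)R^{-2}$ (one may also keep the spatial localization and bound by $\varepsilon\|\nabla u(t)\|_{L^2(|x|\ge R/2)}^2+C(\varepsilon)R^{-2}$). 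Reinserting this in the virial inequality, it remains to absorb the $\varepsilon\|\nabla u(t)\|_{L^2}^2$ term. Since $E[u_0]<0$ places the mass above the ground-state threshold, $\|\nabla u(t)\|_{L^2}$ has no a priori bound, so this term must genuinely be cancelled rather than merely made small; one does so using conservation of energy, $\|\nabla u(t)\|_{L^2}^2=2E[u_0]+\tfrac{2}{p+1}\int|x|^{-b}|u(t)|^{p+1}\,dx$, together with a slightly more careful choice of profile --- e.g. taking $\partial_r^2\varphi\le 2-c$ for a small $c=c(R)\to0$, which introduces into $V_R''$ a negative kinetic remainder of size comparable to $c\,\|\nabla u(t)\|_{L^2}^2$ that, balanced against the $O(c)$ perturbation of the main term via the energy identity, leaves
\[
V_R''(t)\le 8\,E[u_0]<0\qquad\text{for all }t,
\]
provided $R$ is fixed large enough depending only on $M[u_0],E[u_0],N,b$. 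Integrating twice, $0\le V_R(t)\le V_R(0)+V_R'(0)\,t+4E[u_0]\,t^2$, and the right-hand side becomes negative for $t$ large --- a contradiction. Therefore $T^\ast<\infty$, i.e. $u$ blows up in finite time.

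I expect the main obstacle to be precisely this last cancellation step: unlike the finite-variance situation (where $R=\infty$ and there is no error at all), here the $\varepsilon\|\nabla u(t)\|_{L^2}^2$ coming from the exterior nonlinear term must be reabsorbed uniformly in $t$, and making the $R^{-b}$ decay supplied by the inhomogeneity, the subcritical power $2-b$, the negative kinetic remainder of the localized virial, and the conservation laws dovetail correctly is the delicate part. A secondary, routine point is the rigorous justification of the virial identities for non-finite-variance $H^1$ data with the non-integrable weight $|x|^{-b}$, handled by a standard approximation argument.
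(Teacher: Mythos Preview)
Your global architecture is exactly right---localized virial, extract $R^{-b}$ from $|x|^{-b}$ on the exterior, use that the Gagliardo--Nirenberg kinetic exponent $2-b$ is subcritical, then Young---and you correctly flag the absorption of $\varepsilon\|\nabla u(t)\|_2^2$ as the crux. But the mechanism you propose for that absorption does not close. Using the energy identity to rewrite $\varepsilon\|\nabla u\|_2^2=2\varepsilon E[u_0]+\tfrac{2\varepsilon}{p+1}\int|x|^{-b}|u|^{p+1}$ just trades one uncontrolled global quantity for another. And perturbing the profile so that $\partial_r^2\varphi\le 2-c$ cannot help: if $c>0$ is fixed you lose the exact identity on $\{|x|<R\}$ and no longer produce a multiple of $E[u_0]$; if $c=c(R)\to0$ it cannot dominate a fixed $\varepsilon$. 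Even your localized variant $\varepsilon\|\nabla u\|_{L^2(|x|>R/2)}^2$ fails, because the only available negative kinetic term is $-\int_{|x|>R}\Phi_{1,R}|\nabla u|^2$ with $\Phi_{1,R}=4(2-r^{-1}\partial_r\phi_R)$ vanishing at $r=R$, so near the transition region there is nothing to absorb a weight that stays bounded away from zero.

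What the paper does instead is a genuine refinement you are missing. Rather than a bare Gagliardo--Nirenberg estimate, it proves a \emph{weighted} interpolation inequality of the form $\int\Phi_{2,R}|u|^{p+1}\,dx\lesssim\bigl(\|\nabla(\Phi_{2,R}^{1/(2-b)})u\|_2+\|\Phi_{2,R}^{1/(2-b)}\nabla u\|_2\bigr)^{2-b}\|u\|_2^{\sigma}$, where $\Phi_{2,R}$ is the actual coefficient of the exterior nonlinear error; after Young this yields $\varepsilon\int\Phi_{2,R}^{2/(2-b)}|\nabla u|^2$. The point is that the exponent $\tfrac{1}{2-b}>\tfrac12$ (strictly larger than what plain Gagliardo--Nirenberg would give) forces $\Phi_{2,R}^{2/(2-b)}$ to vanish at $r=R$ \emph{faster} than $\Phi_{1,R}$, so that a pointwise comparison $c\varepsilon\,\Phi_{2,R}^{2/(2-b)}\le\Phi_{1,R}$ becomes possible for small $\varepsilon$ independent of $R$. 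Making this comparison rigorous also requires a carefully engineered profile: the paper takes $\partial_r\phi(r)=2r-2(r-1)^k$ on $1<r<1+(1/k)^{1/(k-1)}$ with $k$ large (depending on $b$), so that both $\Phi_{1,R}$ and $\Phi_{2,R}$ vanish like $(r/R-1)^k$ near $R$, and then the extra power $2/(2-b)>1$ wins. These two ingredients---the weighted interpolation lemma and the tailored Ogawa--Tsutsumi-type profile---are the substantive content of the proof, and your proposal stops just short of them.
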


The previous theorem was first obtained by \citet{ogawa1991blow} for the classical radial NLS equation when $N\geq 2$ and, combining a scaling argument, the same authors in \cite{OT91PAMS} were able to improve this result for the non-radial NLS equation in dimension one. Applying the same ideas, \citet{dinh2017blowup} extended these results for the INLS model under identical conditions: radial for $N\geq 2$ and non-radial for $N=1$. Here we refine the argument of \citet{ogawa1991blow} to consider the non-radial INLS equation in all spatial dimensions, without relying on the scaling argument of \cite{OT91PAMS}. We should point out that Theorem \ref{Blowup} is still unknown for the classical NLS equation in the non-radial when $N\geq 2$.


Recently, several papers reported results for the non-radial INLS equation that so far can only be obtained for the radial NLS equation \cite{BL21}, \cite{CC21}, \cite{CF21}, \cite{GM21} and \cite{M21}. The present paper is another contribution in this direction. The new tool in the INLS setting is the decaying factor $|x|^{-b}$ which implies a control, away from the origin, for the terms arising from the nonlinearity. For the NLS equation this type of control is usually made by an application of a radial Sobolev embedding due to \citet[Lemma 1]{Strauss}.  

This paper is organized as follows. In Section \ref{sec2}, we introduce the basic notation and established a non-radial interpolation estimate. The last  section is devoted to the proof of Theorem \ref{Blowup}.

\section{Notation and Preliminaries}\label{sec2}
In this section we introduce the basic notation used throughout the manuscript. The symbol $c$ will denote various positive constants and its exact value is not essential in our analysis. We write $a \lesssim b$ to denote $a \leq c\,b$ for some positive constant $c$. Similarly we define $a \gtrsim b$.  The spaces $L^p(\mathbb{R}^N)$ and $H^{1}(\mathbb{R}^N)$ will be abbreviated as $L^p$ and $H^{1}$ with the norms denoted by $\|\cdot\|_p$ and $\|\cdot\|_{H^{1}}:=\|\cdot\|_{2}+\|\nabla \cdot\|_{2}$, respectively. We also consider the functional space $W^{1,\infty}=\{f\in L^{\infty}, \nabla f\in L^{\infty}\}$.


Next, we obtain an interpolation estimate that will be very useful in the proof of our main result.

\begin{lemma}[Non-radial interpolation estimate] Let $N\geq 1$, $0<b <2$ and $\phi$ be a positive real valued function. 
\begin{itemize}
\item If $N\neq 2$ and $\phi^{\frac{1}{2-b}} \in W^{1,\infty}$, then for all $f\in H^1$ we have
\begin{equation}\label{Interp1}
\int \phi|f|^{\frac{4-2b}{N} +2}dx\lesssim \left(\| \nabla \left(\phi^{\frac{1}{2-b}}\right) f \|_2+\| \phi^{\frac{1}{2-b}} \nabla f\|_2\right)^{2-b}\|f\|_2^{\frac{4+b(N-2)}{N}}.
\end{equation}
\item If $N= 2$ and $\phi^{\frac{1}{2-\frac{b}{2}}} \in W^{1,\infty}$, then for all $f\in H^1$ we have
\begin{equation}\label{Interp2}
\int \phi|f|^{4-b}dx\lesssim \left(\| \phi^{\frac{1}{2-\frac{b}{2}}}  f\|_2+\| \nabla (\phi^{\frac{1}{2-\frac{b}{2}}}) f \|_2+\| \phi^{\frac{1}{2-\frac{b}{2}}} \nabla f\|_2\right)^{2-\frac{b}{2}}\|f\|_2^{2-\frac{b}{2}}.
\end{equation}
\end{itemize}
\end{lemma}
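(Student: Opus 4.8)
The plan is to reduce matters to a Gagliardo--Nirenberg/Sobolev-type estimate for a single auxiliary function. Set $\psi:=\phi^{\frac1{2-b}}$ when $N\neq2$ and $\psi:=\phi^{\frac1{2-b/2}}$ when $N=2$, so that $\phi=\psi^{2-b}$, resp. $\phi=\psi^{2-b/2}$, and put $g:=\psi\,|f|$. Since $\psi>0$ lies in $W^{1,\infty}$ and $|f|\in H^1$ with $\bigl|\nabla|f|\bigr|\le|\nabla f|$ a.e., the function $g$ belongs to $H^1$ and satisfies $\|\nabla g\|_2\le\|\nabla(\psi)\,f\|_2+\|\psi\nabla f\|_2$ and $\|g\|_2\le\|\psi\|_\infty\|f\|_2$. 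In each case I will rewrite $\phi|f|^{\frac{4-2b}{N}+2}$ as a product of a power of $g$ and a power of $f$, push the $f$-factors into $L^2$ by H\"older, and control the remaining power of $g$ by a Sobolev embedding; the point is that $\psi$ stays attached to $f$, so that only the weighted gradient norms on the right-hand sides of \eqref{Interp1}--\eqref{Interp2} appear.

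\emph{The case $N\ge3$.} Set $\beta:=\frac{4+b(N-2)}{N}>0$, so that $\phi|f|^{\frac{4-2b}{N}+2}=|g|^{2-b}|f|^{\beta}$ pointwise, and note that with $2^{*}=\frac{2N}{N-2}$ one has the homogeneity identity $\frac{2-b}{2^{*}}+\frac{\beta}{2}=1$. Hence H\"older's inequality gives $\int\phi|f|^{\frac{4-2b}{N}+2}\,dx\le\|g\|_{2^{*}}^{2-b}\|f\|_2^{\beta}$, and the Sobolev inequality $\|g\|_{2^{*}}\lesssim\|\nabla g\|_2$ together with the bound for $\|\nabla g\|_2$ above yields \eqref{Interp1}.

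\emph{The case $N=2$.} Now $\frac{4-2b}{N}+2=4-b$ and $\phi|f|^{4-b}=|g|^{2-\frac b2}|f|^{2-\frac b2}$. Applying H\"older with exponents $\frac4b$ and $\frac4{4-b}$ --- whose reciprocals add to $1$ since $0<b<2$ --- places the $f$-factor in $L^2$, giving $\int\phi|f|^{4-b}\,dx\le\|g\|_{q}^{2-\frac b2}\|f\|_2^{2-\frac b2}$ with $q=\frac{2(4-b)}{b}$. As $2\le q<\infty$ for $0<b<2$, the Sobolev embedding $H^1(\mathbb{R}^2)\hookrightarrow L^{q}$ gives $\|g\|_q\lesssim\|g\|_{H^1}\le\|\psi f\|_2+\|\nabla(\psi)\,f\|_2+\|\psi\nabla f\|_2$, which is \eqref{Interp2}. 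It is exactly the failure of the endpoint embedding $H^1(\mathbb{R}^2)\hookrightarrow L^\infty$ that forces the extra term $\|\psi f\|_2$ and the modified exponent $2-\frac b2$ in the two-dimensional statement.

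\emph{The case $N=1$.} Here $\frac{4-2b}{N}+2=6-2b$ and I rewrite $\phi|f|^{6-2b}=(\psi|f|^2)^{2-b}|f|^2$, so that $\int\phi|f|^{6-2b}\,dx\le\|\psi|f|^2\|_\infty^{2-b}\|f\|_2^2$. It remains to bound $\|\psi|f|^2\|_\infty$: the function $\psi|f|^2$ lies in $W^{1,1}(\mathbb{R})$, hence has a continuous representative vanishing at $\pm\infty$, and writing it as the integral of its derivative and using $\bigl|(\psi|f|^2)'\bigr|\le|\nabla\psi|\,|f|^2+2\psi|f|\,|\nabla f|$ together with the Cauchy--Schwarz inequality gives $\|\psi|f|^2\|_\infty\lesssim(\|\nabla(\psi)\,f\|_2+\|\psi\nabla f\|_2)\|f\|_2$. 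Since $2-b+2=4-b=\frac{4+b(N-2)}{N}$ when $N=1$, this proves \eqref{Interp1}. The main obstacle is precisely this low-dimensional bookkeeping: the clean critical-Sobolev argument of the case $N\ge3$ breaks down both in dimension two (no $L^\infty$ endpoint) and in dimension one (the inequality $\|g\|_\infty\lesssim\|\nabla g\|_2$ is false), so for $N=1$ one must instead exploit the decay at infinity of the \emph{product} $\psi|f|^2$ rather than of $g$ alone.
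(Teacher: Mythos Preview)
Your proof is correct and follows essentially the same route as the paper's: in each dimension you factor the integrand as a power of $\psi f$ times a residual power of $|f|$, apply H\"older to isolate $\|f\|_2$, and then invoke the appropriate Sobolev-type bound ($\dot H^1\hookrightarrow L^{2^*}$ for $N\ge3$, $H^1\hookrightarrow L^q$ for $N=2$, and the $W^{1,1}\hookrightarrow L^\infty$ argument on $\psi|f|^2$ for $N=1$). The only cosmetic differences are that you work with $g=\psi|f|$ and the chain-rule bound $|\nabla|f||\le|\nabla f|$, whereas the paper uses $\psi f$ directly, and for $N=1$ you invoke the embedding $W^{1,1}(\mathbb R)\hookrightarrow C_0(\mathbb R)$ while the paper argues by density with compactly supported $f$.
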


\begin{proof}
For $N\geq 3$, we apply the Holder inequality and Sobolev embedding to obtain
\begin{eqnarray*}
\int \phi|f|^{\frac{4-2b}{N} +2}dx&=&\int (\phi|f|^{2-b})(|f|^{\frac{4-2b}{N} +b})dx\\
&\lesssim & \|\phi |f|^{2-b}\|_{\frac{2N}{(N-2)(2-b)}}\||f|^{\frac{4-2b}{N} +b}\|_{\frac{2N}{4+b(N-2)}}\\
&=& \|\phi^{\frac{1}{2-b}}f\|^{2-b}_{\frac{2N}{N-2}}\|f\|^{\frac{4+b(N-2)}{N} }_{2}\\
&\lesssim & \|\nabla (\phi^{\frac{1}{2-b}}f)\|^{2-b}_{2}\|f\|^{\frac{4+b(N-2)}{N} }_{2},
\end{eqnarray*}
which implies the desired inequality.

Next when $N=1$, we first claim that
\begin{equation}\label{OTN1}
\|\phi^{\frac{1}{4-2b}}f\|_{\infty}\lesssim \|f\|^{1/2}_2\left(\|\left(\phi^{\frac{1}{2-b}}\right)' f\|_2+\|\phi^{\frac{1}{2-b}} f'\|_2\right)^{1/2}.
\end{equation}
Indeed, by an approximation argument we may assume that $f$ has compact support and therefore
\begin{eqnarray*}
\phi^{\frac{1}{2-b}}f^2(x)&=&\frac{1}{2}\left(\int_{-\infty}^{x}\left(\phi^{\frac{1}{2-b}}f^2\right)'ds+\int_{x}^{+\infty}\left(\phi^{\frac{1}{2-b}}f^2\right)'ds \right)\\
&\lesssim& \int \left(\phi^{\frac{1}{2-b}}\right)'f^2ds +\int \phi^{\frac{1}{2-b}} f' fds \\
&\lesssim & \|f\|_2\left(\|\left(\phi^{\frac{1}{2-b}}\right)'f\|_2+\|\phi^{\frac{1}{2-b}} f'\|_2\right)
\end{eqnarray*}
and \eqref{OTN1} is proved. Using this inequality we have
\begin{eqnarray*}
\int \phi|f|^{4-2b +2}dx&=&\int \left|\phi^{\frac{1}{4-2b}}f\right|^{4-2b}|f|^{2}dx\\
&\lesssim & \|f\|^{2-b}_2\left(\| \left(\phi^{\frac{1}{2-b}}\right)' f\|_2+\| \phi^{\frac{1}{2-b}} f'\|_2\right)^{2-b} \|f\|^2_{2},
\end{eqnarray*}
so \eqref{Interp1} also holds in this case.

Finally, we consider the case $N=2$ and use the following Sobolev embedding (see, for instance, \citet[Proposition 4.18]{DD12})
\begin{equation}\label{SEI1} 
\|f\|_{L^r}\leq c\|f\|_{H^{1}}, \,\, \mbox{for all}\,\, r\in[2,+\infty).
\end{equation}
together with the Holder inequality to obtain
\begin{eqnarray*}
\int \phi|f|^{4-b}dx&=&\int |\phi^{\frac{1}{2-\frac{b}{2}}}f|^{2-\frac{b}{2}}|f|^{2-\frac{b}{2}}dx\\
&\lesssim & \||\phi^{\frac{1}{2-\frac{b}{2}}}f|^{2-\frac{b}{2}}\|_{\frac{4}{b}} \||f|^{2-\frac{b}{2}}\|_{\frac{2}{2-\frac{b}{2}}}\\
&= &\|\phi^{\frac{1}{2-\frac{b}{2}}}f\|_{\frac{8-2b}{b}}^{2-\frac{b}{2}}\|f\|_{2}^{2-\frac{b}{2}}\\
&\lesssim & \|\phi^{\frac{1}{2-\frac{b}{2}}}f\|_{H^1}^{2-\frac{b}{2}}\|f\|_{2}^{2-\frac{b}{2}}.
\end{eqnarray*}
Thus, from the definition of the $H^1$-norm we deduce inequality \eqref{Interp2}.
\end{proof}

\begin{rem}
The classical Gagliardo-Nirenberg (see for instance \citet{W83}, inequality (I.2))
$$
\int | f|^{2\sigma+2}dx\leq C \|\nabla f\|^{N\sigma}_2\|f\|_2^{2+\sigma(2-N)}, \quad \mbox{if} \quad 0<\sigma<\frac{2}{N-2}
$$
implies, for $\sigma = \frac{2-b}{N}$ and  assuming $\phi^{\frac{1}{\frac{4-2b}{N}+2}} f\in H^1$, that
\begin{equation}\label{Interp3}
\int \phi | f|^{\frac{4-2b}{N}+2}dx\leq C \|\nabla (\phi^{\frac{1}{\frac{4-2b}{N}+2}} f)\|^{2-b}_2\|\phi^{\frac{1}{\frac{4-2b}{N}+2}} f\|_2^{\frac{4+b(N-2)}{N}}.
\end{equation}
The main difference between inequalities \eqref{Interp1}-\eqref{Interp2} and \eqref{Interp3} is the power of the function $\phi$. As we will see later, to prove our main result we need this power to be greater then $1/2$ and therefore inequality \eqref{Interp3} is not enough to close the argument.
\end{rem}


\section{The proof of Theorem \ref{Blowup}}\label{sec3}
Let $u_0\in H^1$ such that $E[u_0]<0$ and assume by contradiction that the corresponding solution $u(t)$ of \eqref{PVI} exists globally in time. For a bounded non-negative radial function $\phi\in C^{\infty}(\Real^N)$, define
 $\phi_R(x)=R^2\phi\left(\frac{x}{R}\right)$ and
\begin{align}\label{virial}
	z_R(t)=\displaystyle\int\phi_R|u(t)|^2\,dx,
\end{align} 
for $R>0$ to be chosen later. It is clear that
$$
z_R(t)\leq R^2\|\phi\|_{\infty}\|u_0\|_2^2,
$$
by the mass conservation \eqref{Mass}.

From direct computations (see, for instance, Proposition 7.2 in \cite{FG20}), we have the following virial identities
\begin{equation}\label{zR'2}
	z_R'(t)=2\mbox{Im}\int \nabla\phi_R\cdot\nabla u(t)\overline{u}(t)\,dx
\end{equation}
and 
\begin{align}\label{zR''22}
	z_R''(t)=&4\mbox{Re} \sum_{j,k=1}^{N}\int \partial_ju(t)\,\partial_k\overline u(t)\,\partial^2_{jk}\phi_R\,dx-\int |u(t)|^2 \Delta^2\phi_R\nonumber\\
	&-\frac{4-2b}{N+2-b}\int|x|^{-b}|u(t)|^{\frac{4-2b}{N}+2}\Delta\phi_R\,dx\nonumber\\
	&+\frac{2N}{N+2-b}\int\nabla\left(|x|^{-b}\right)\cdot \nabla\phi_R|u(t)|^{\frac{4-2b}{N}+2}\,dx.
\end{align}

Recall that
$$
\partial_j=\frac{x_j}{r}\partial_r \,\,\, \mbox{and}\,\,\, \partial_{kj}=\left(\frac{\delta_{kj}}{r}-\frac{x_kx_j}{r^3}\right)\partial_r + \frac{x_kx_j}{r^2}\partial^2_r
$$
where $\partial_r$ denotes the radial derivative with respect to $r=|x|$. From these relations, since $\phi$ is radial, we deduce
\begin{eqnarray*}
\sum_{j,k=1}^{N}\partial_ju\,\partial_k\overline u\,\partial^2_{jk}\phi_R&=& \sum_{j,k=1}^{N}\partial_ju\,\partial_k\overline u\left[\left(\frac{\delta_{kj}}{r}-\frac{x_kx_j}{r^3}\right)\partial_r\phi + \frac{x_kx_j}{r^2}\partial^2_r\phi\right]\\
&=& \frac{\partial_r\psi}{r}|\nabla u|^2+\left(\frac{\partial_r^2\phi}{r^2}-\frac{\partial_r\phi}{r^3}\right)\left(\sum_{j,k=1}^{N} (x_j\partial_ju)\,\overline{(x_k\partial_ku)}\right)\\
&=& \frac{\partial_r\psi}{r}|\nabla u|^2+\left(\frac{\partial_r^2\phi}{r^2}-\frac{\partial_r\phi}{r^3}\right)|x\cdot \nabla u|^2.
\end{eqnarray*}
Moreover, it is easy to see that
$$
\Delta \phi = \frac{N-1}{r}\partial_r\phi +\partial^2_r\phi
$$
and
$$
\nabla\left(|x|^{-b}\right) \cdot \nabla \phi=-b|x|^{-b}\frac{\partial_r\psi}{r},
$$
since $\nabla\left(|x|^{-b}\right)=-b|x|^{-b-2}x$.

Therefore, we can rewrite the identities \eqref{zR'2}-\eqref{zR''22} as
\begin{align}\label{virial1}
	z'(t)=2\,\mbox{Im}\int \partial_r\phi_R\frac{x\cdot \nabla u(t)}{r}\overline{u}(t)\,dx
\end{align}
and 
\begin{align}\label{virial2}
	z_R ''(t)=&\,\,4\int \frac{\partial_r\phi_R}{r}|\nabla u(t)|^2\,dx+4\int \left(\frac{\partial_r^2\phi_R}{r^{2}}-\frac{\partial_r \phi_R}{r^3}\right)|x\cdot \nabla u(t)|^2\,dx-\int|u(t)|^2 \Delta^2\phi_R
	\,dx \nonumber\\
	&+\frac{4-2b}{N+2-b}\int \left[-\partial^2_r\phi_R -\left(N-1+\frac{bN}{2-b}\right)\frac{\partial_r \phi_R}{r}\right]|x|^{-b}|u(t)|^{\frac{4-2b}{N}+2}\,dx.
\end{align}

Continuing from above, we use the energy conservation \eqref{Energy} to obtain
\begin{align}\label{zR''}
	z_R''(t)=2E[u_0]+K_1+K_2+K_3,
\end{align}
where
\begin{align}
	K_1=&-4\int \left(2-\frac{\partial_r\phi_R}{r}\right)|\nabla u(t)|^2\,dx-4\int \left(\frac{\partial_r \phi_R}{r^3}-\frac{\partial^2_r\phi_R}{r^2}\right)|x\cdot \nabla u(t)|^2\,dx,\\
	K_2=&\frac{2}{N+2-b}\int\left[(2-b)(2-\partial^2_r\phi_R)+(2N-2+b)\left(2-\frac{\partial_r \phi_R}{r}\right)\right]|x|^{-b}|u(t)|^{\frac{4-2b}{N}+2}\,dx\\
	K_3=&-\int|u(t)|^2\Delta^2\phi_R\,dx.
\end{align}

Now, we define a function $\phi_R$ such that
\begin{equation}\label{phicond}
\partial_r\phi_R(r)-r{\partial^2_r\phi_R(r)}\geq 0, \,\, \mbox{for all} \,\, r=|x|\in \mathbb{R}.
\end{equation}
Indeed, inspired by the work of \citet{ogawa1991blow}, we first consider, for $k\in \mathbb{N}$ to be chosen later, the following function
\begin{align}\label{v(r)}
	v(r)=
	\left\{
	\begin{array}{ll}
		2r, &\mbox{ if } 0\leq r \leq 1\\
		2r-2(r-1)^k, &\mbox{ if } 1< r \leq 1+\left(\frac{1}{k}\right)^{\frac{1}{k-1}}\\
		\mbox{smooth and}\,\, v'<0, &\mbox{ if } 1+\left(\frac{1}{k}\right)^{\frac{1}{k-1}}< r < 2\\
		0,&\mbox{ if }r\geq 2.
	\end{array}
	\right.
\end{align}
\begin{rem}
Note that the function $f(r)=2r-2(r-1)^k$ for $r\geq 1$ has an absolute maximum at $r=1+\left(\frac{1}{k}\right)^{\frac{1}{k-1}}$ and therefore $f'\left(1+\left(\frac{1}{k}\right)^{\frac{1}{k-1}}\right)=0$.
\end{rem}
Define the radial function
$$
\phi(r)=\int_0^{r}v(s)ds 
$$
Recall that $\phi_R(r)=R^2\phi\left(\frac{r}{R}\right)$, which implies
\begin{equation}\label{phiR}
\partial_r\phi_R(r)=Rv\left(\frac{r}{R}\right) \quad \mbox{and} \quad \partial^2_r\phi_R(r)=v'\left(\frac{r}{R}\right).
\end{equation}
It is easy to see that inequality \eqref{phicond} holds for $0<r\leq R$ and $r\geq 2R$ by direct computation and for $R\left(1+\left(\frac{1}{k}\right)^{\frac{1}{k-1}}\right)< r < 2R$ by \eqref{phiR} and the fact that $v'<0$ and $v\geq 0$ in this region. It remains to consider the region $R<r\leq R\left(1+\left(\frac{1}{k}\right)^{\frac{1}{k-1}}\right)$. By the definition of $v$ \eqref{v(r)} and relations \eqref{phiR}, in this region we have
\begin{equation}\label{phiR-int}
\partial_r\phi_R(r)=2R\left[\frac{r}{R}-\left(\frac{r}{R}-1\right)^k\right] \quad \mbox{and} \quad \partial^2_r\phi_R(r)=2-2k\left(\frac{r}{R}-1\right)^{k-1}.
\end{equation}
Thus
\begin{eqnarray*}
\partial_r\phi_R(r)-{r\partial^2_r\phi_R(r)}&=&2r\left(\frac{r}{R}-1\right)^{k-1}\left[k-\frac{R}{r}\left(\frac{r}{R}-1\right)\right]\\
&\geq &2r\left(\frac{r}{R}-1\right)^{k-1}\left[k-\left(\frac{1}{k}\right)^{\frac{1}{k-1}}\right]>0.
\end{eqnarray*}

In view of inequality \eqref{phicond}, the second integral in the definition of $K_1$ is positive and since $\partial_r\phi_R(r)=2r$ for $0<r\leq R$ we obtain
\begin{align}\label{K_1}
	K_1\leq &-\int_{|x|> R}\Phi_{1,R}|\nabla u(t)|^2\,dx,
\end{align}
where $\Phi_{1,R}=4\left(2-\frac{\partial_r\phi_R}{r}\right)$.

Moreover, using the $\partial^2_r\phi_R(r)=2$ for $0<r\leq R$, we have
\begin{align}
	\mbox{supp} \left[(2-b)(2-\partial^2_r\phi_R)+(2N-2+b)\left(2-\frac{\partial_r \phi_R}{r}\right)\right]\subset (R,\infty),
\end{align}
which implies
$$
K_2=\int_{|x|> R}\Phi_{2,R}|x|^{-b}|u(t)|^{\frac{4-2b}{N}+2}\,dx,
$$
where $\Phi_{2,R}=\frac{2}{N+2-b}\left[(2-b)(2-\partial^2_r\phi_R)+(2N-2+b)\left(2-\frac{\partial_r \phi_R}{r}\right)\right]$. 

It is clear that $\Phi_{1,R}(r), \Phi_{2,R}(r)\geq 0$, since by definition $\partial^2_r\phi_R(r)\leq 2$ and $\partial_r\phi_R(r)\leq 2r$ for all $r=|x|\in \mathbb{R}$.

Now we use the decay of $|x|^{-b}$ away from the origin to estimate $K_2$. To fix the ideas we only consider the case $N\neq 2$. When $N=2$ the proof is completely analogous just applying inequality \eqref{Interp2} instead of \eqref{Interp1} (see also Remark \ref{N=2} below for more details). From the inequality \eqref{Interp1} and Young's inequality, we deduce
\begin{eqnarray*}
K_2&\lesssim & \frac{1}{R^b}\int_{|x|> R}\Phi_{2,R}|u(t)|^{\frac{4-2b}{N}+2}\,dx\\
&\lesssim &  \frac{1}{R^b} \left(\|\nabla (\Phi_{2,R}^{\frac{1}{2-b}})u(t)\|_2+\|\Phi_{2,R}^{\frac{1}{2-b}}\nabla u(t)\|_2 \right)^{2-b}\|u_0\|_2^{\frac{4-2b+Nb}{N}}\\
&\lesssim &  \varepsilon \left(\|\nabla (\Phi_{2,R}^{\frac{1}{2-b}})u(t)\|_2+\|\Phi_{2,R}^{\frac{1}{2-b}}\nabla u(t)\|_2 \right)^{2}   +\frac{\|u_0\|_2^{\frac{2(4-2b+Nb)}{bN}}}{\varepsilon^{\frac{2-b}{b}}R^2}.
\end{eqnarray*}
It is clear that $\Phi_{2,R}\in L^{\infty}$. We claim that 
\begin{equation}\label{nablaPhi}
\left|\nabla \left(\Phi_{2,R}^{\frac{1}{2-b}}(r)\right)\right|\lesssim \frac{1}{R},\,\,\mbox{for all}\,\, r=|x|\in \mathbb{R}.
\end{equation}
Indeed if $r\leq R$ and $r\geq 2R$, then $\nabla \left(\phi^{\frac{1}{2-b}}(r)\right)=0$ and the desired inequality holds. In the intermediary region we first consider $R<r\leq R\left(1+\left(\frac{1}{k}\right)^{\frac{1}{k-1}}\right)$, where, in view of \eqref{phiR-int}, we obtain
\begin{eqnarray*}
\left|\nabla \left(\Phi_{2,R}^{\frac{1}{2-b}}(r)\right)\right|&=&\left|\partial_r\left[\left(\frac{r}{R}-1\right)^{\frac{k-1}{2-b}}\left(\frac{4}{N+2-b}\left[k(2-b)+(2N-2+b)\left(1-\frac{R}{r}\right)\right]\right)^{\frac{1}{2-b}}\right]\right|\\
&\lesssim & \frac{1}{R}\left(\frac{r}{R}-1\right)^{\frac{k-1}{2-b}-1}+\frac{R}{r^2}\left(\frac{r}{R}-1\right)^{\frac{k-1}{2-b}} \\
&\lesssim & \frac{1}{R},
\end{eqnarray*}
if we assume that $\frac{k-1}{2-b}-1>0$ or $k>3-b$. Finally, when $ R\left(1+\left(\frac{1}{k}\right)^{\frac{1}{k-1}}\right)<r< 2R$ we have that $\Phi_{2,R}(r) \gtrsim 1$ and from \eqref{phiR} we deduce
\begin{eqnarray*}
\left|\nabla \left(\Phi_{2,R}^{\frac{1}{2-b}}(r)\right)\right|&=&\left|\partial_r\left[\left(\frac{2}{N+2-b}\left[(2-b)\left(2-v'\left(\frac{r}{R}\right)\right)+(2N-2+b)\left(2-\frac{R}{r}v\left(\frac{r}{R}\right)\right)\right]\right)^{\frac{1}{2-b}}\right]\right|\\
&\lesssim &  \Phi_{2,R}^{\frac{b-1}{2-b}}(r)\left(\frac{1}{R}\left|v''\left(\frac{r}{R}\right)\right|+\frac{R}{r^2}\left|v\left(\frac{r}{R}\right)\right|+\frac{1}{r}\left|v'\left(\frac{r}{R}\right)\right|\right)\\
&\lesssim & \frac{1}{R}.
\end{eqnarray*}
Returning to the bound of $K_2$ and using estimate \eqref{nablaPhi} we get
\begin{align}\label{K_2}
K_2\lesssim & \,\, \varepsilon \left(\|\nabla (\Phi_{2,R}^{\frac{1}{2-b}})u(t)\|^{2}_2+\|\Phi_{2,R}^{\frac{1}{2-b}}\nabla u(t)\|^{2}_2 \right)  +\frac{\|u_0\|_2^{\frac{2(4-2b+Nb)}{bN}}}{\varepsilon^{\frac{2-b}{b}}R^2}\nonumber  \\ 
\lesssim &\,\, \varepsilon \int_{|x|> R}\Phi_{2,R}^{\frac{2}{2-b}}\left|\nabla u(t)\right|^2dx +\frac{\varepsilon \|u_0\|^2}{R^2}+\frac{\|u_0\|_2^{\frac{2(4-2b+Nb)}{bN}}}{\varepsilon^{\frac{2-b}{b}}R^2}.
\end{align}

Finally, since $\|\Delta^2\phi_R\|_{\infty}\lesssim 1/R^2$, we thus obtain from the mass conservation \eqref{Mass} the crude estimate
\begin{equation}\label{K_3}
K_3\lesssim \frac{\|u_0\|_2^2}{R^2}.
\end{equation}

Inserting estimates \eqref{K_1}, \eqref{K_2} and \eqref{K_3} into the right hand side of \eqref{zR''}, we infer that there exists $c>0$ such that
\begin{align}\label{zR''2}
z_R''(t)\leq& \,\,2E[u_0]+\int_{|x|> R} \left(c\varepsilon\Phi_{2,R}^{\frac{2}{2-b}}-\Phi_{1,R}\right)\left|\nabla u(t)\right|^2dx \nonumber \\
&+c\frac{\varepsilon\|u_0\|_2^2}{R^2}+c\frac{\|u_0\|_2^{\frac{2(4-2b+Nb)}{bN}}}{\varepsilon^{\frac{2-b}{b}}R^2}.
\end{align}

Next we claim that for sufficiently small $\varepsilon>0$ 
\begin{equation}\label{Phivare}
c\varepsilon\Phi_{2,R}^{\frac{2}{2-b}}(r)-\Phi_{1,R}(r)\leq 0, \,\,\mbox{for all}\,\, r> R.
\end{equation}

We first consider the region $R<r\leq R\left(1+\left(\frac{1}{k}\right)^{\frac{1}{k-1}}\right)$. By relations \eqref{phiR-int}, in this region we have
\begin{align}
c\varepsilon\Phi_{2,R}^{\frac{2}{2-b}}(r)-\Phi_{1,R}(r)=&c\varepsilon\left(\frac{r}{R}-1\right)^{\frac{2(k-1)}{2-b}}\left(\frac{4}{N+2-b}\left[k(2-b)+(2N-2+b)\left(1-\frac{R}{r}\right)\right]\right)^{\frac{2}{2-b}}\\
&-8\frac{R}{r}\left(\frac{r}{R}-1\right)^{k} \\
= &\left(\frac{r}{R}-1\right)^{k}\left[c\varepsilon\left(\frac{r}{R}-1\right)^{\frac{bk-2}{2-b}}\left(\frac{4}{N+2-b}\left[k(2-b)\right.\right.\right.\\
&+\left.\left.\left.(2N-2+b)\left(1-\frac{R}{r}\right)\right]\right)^{\frac{2}{2-b}}-8\frac{R}{r}\right]\\
\leq & \left(\frac{r}{R}-1\right)^{k}\left[c\varepsilon\left(\frac{1}{k}\right)^{\frac{bk-2}{(2-b)(k-1)}}\left(\frac{4}{N+2-b}\left[k(2-b)\right.\right.\right.\\
&+\left.\left.\left.(2N-2+b)\left(\frac{1}{k}\right)^{\frac{1}{k-1}}\right]\right)^{\frac{2}{2-b}}-\frac{8}{1+\left(\frac{1}{k}\right)^{\frac{1}{k-1}}}\right],
\end{align}
if we assume that $bk-2>0$ or $k>2/b$. So, we can chose $\varepsilon>0$ sufficiently small such that \eqref{Phivare} holds in this case.

Now we turn our attention to the region $R\left(1+\left(\frac{1}{k}\right)^{\frac{1}{k-1}}\right)< r$. Since $v'(r/R)\leq 0$ in this region from \eqref{phiR} we first have
\begin{equation}\label{Reg2}
\frac{\partial_r \phi_R}{r}(r)= \frac{R}{r}v\left(\frac{r}{R}\right)\leq \frac{1}{1+\left(\frac{1}{k}\right)^{\frac{1}{k-1}}}v\left(1+\left(\frac{1}{k}\right)^{\frac{1}{k-1}}\right)= 2-2\frac{\left(\frac{1}{k}\right)^{\frac{k}{k-1}}}{1+\left(\frac{1}{k}\right)^{\frac{1}{k-1}}}
\end{equation}
and
$$
\left|\partial^2_r\phi_R(r)\right|=\left|v'\left(\frac{r}{R}\right)\right|\leq \|v'\|_{\infty}.
$$
The last two inequalities imply $\Phi_{2,R}(r)\lesssim 1$. Moreover, from \eqref{Reg2}, we have
$$
\Phi_{1,R}(r)=4\left(2-\frac{R}{r}v\left(\frac{r}{R}\right)\right)\geq 8\frac{\left(\frac{1}{k}\right)^{\frac{k}{k-1}}}{1+\left(\frac{1}{k}\right)^{\frac{1}{k-1}}}.
$$
Therefore there exists $\varepsilon>0$ such that \eqref{Phivare} also holds in this case.

Note that in both cases considered above $\varepsilon>0$ was chosen independent of $R>0$. Finally, collecting estimates \eqref{zR''2}, \eqref{Phivare} and taking $R>0$ sufficiently large we deduce
$$
\frac{d^2}{dt^2}\displaystyle\int\phi_R|u(t)|^2\,dx= z_R''(t)\leq E[u_0]<0,
$$
and standard arguments imply that the solution blows-up in finite time concluding the proof of Theorem \ref{Blowup}.
\begin{rem}\label{N=2}
In the case $N=2$, the inequality \eqref{zR''2} may be replaced by
\begin{align}\label{zR''3}
z_R''(t)\leq& \,\,2E[u_0]+\int_{|x|> R} \left(c\varepsilon\Phi_{2,R}^{\frac{2}{2-\frac{b}{2}}}-\Phi_{1,R}\right)\left|\nabla u(t)\right|^2dx \nonumber \\
&+c\frac{\varepsilon\|u_0\|_2^2}{R^2}+c\frac{\|u_0\|_2^{\frac{2(4-b)}{b}}}{\varepsilon^{\frac{4-b}{b}}R^4}+c\frac{\|u_0\|_2^{4-b}}{R^b}.
\end{align}
taking into account the inequality \eqref{Interp2} to estimate $K_2$. The same arguments employed above implies 
\begin{equation*}\label{Phivare2}
c\varepsilon\Phi_{2,R}^{\frac{2}{2-\frac{b}{2}}}(r)-\Phi_{1,R}(r)\leq 0, \,\,\mbox{for all}\,\, r> R,
\end{equation*}
for sufficiently small $\varepsilon>0$, independent of $R>0$, as long as $k> 4/b$ for $v(r)$ given by \eqref{v(r)}. Moreover, the last three terms in the right hand side of \eqref{zR''3} can be made small, for $R>0$ sufficiently large, concluding the proof Theorem \ref{Blowup} also in this case.
\end{rem}

\vspace{0.5cm}
\noindent 
\textbf{Acknowledgments.} M.C. was partially supported by Coordena\c{c}\~ao de Aperfei\c{c}oamento de Pessoal de N\'ivel Superior - CAPES. L.G.F. was partially supported by Coordena\c{c}\~ao de Aperfei\c{c}oamento de Pessoal de N\'ivel Superior - CAPES, Conselho Nacional de Desenvolvimento Cient\'ifico e Tecnol\'ogico - CNPq and Funda\c{c}\~ao de Amparo a Pesquisa do Estado de Minas Gerais - Fapemig/Brazil.


\newcommand{\Addresses}{{
		\bigskip
		\footnotesize
		
		MYKAEL A. CARDOSO, \textsc{Department of Mathematics, UFPI, Brazil}\par\nopagebreak
		\textit{E-mail address:} \texttt{mykael@ufpi.edu.br}
		
		\medskip
		
		LUIZ G. FARAH, \textsc{Department of Mathematics, UFMG, Brazil}\par\nopagebreak
		\textit{E-mail address:} \texttt{farah@mat.ufmg.br}

}}
\setlength{\parskip}{0pt}
\Addresses

\end{document}